\newcommand{\N}{{\mathbb N}}
\newcommand{\R}{{\mathbb R}}
\newcommand{\Z}{\mathbb{Z}}
\newtheorem{theorem}{Theorem}[section]
\newtheorem{lemma}[theorem]{Lemma}
\newtheorem{remark}[theorem]{Remark}
\newtheorem{corollary}[theorem]{Corollary}
\numberwithin{equation}{section}
\begin{document}

\title[]{Hexagonal standing wave patterns  of a two-dimensional Boussinesq system}
\author{Shenghao Li,  Min Chen, Bing-Yu Zhang}
\date{}
\begin{abstract}
We prove the existence of a large family of two-dimensional standing waves, that are triple periodic solutions, for a Boussinesq system which describes two-way propagation of water waves in a channel. Our proof uses the Lyapunov-Schmidt method to find the bifurcation standing waves.
\end{abstract}

\keywords{standing wave;  Boussinesq system; }


\maketitle


\section{Introduction}
\setcounter{equation}{0}
\setcounter{theorem}{0}

In this article, we study the  standing wave patterns of a  two-dimensional system
\begin{equation}\label{2d}
\begin{cases}
\eta_t+\nabla\cdot\textbf{v}+ \nabla \cdot (\eta \textbf{v} )-\frac16\Delta \eta_t=0,\\
\textbf{v}_t+ \nabla \eta + \frac{1}{2} \nabla |\textbf{v}|^2-\frac16 \Delta \textbf{v}_t=0,
\end{cases}
\end{equation}
which was put forward by  Bona, Colin and Lannes \cite{95}. The horizontal location $\mathbf{x}=(x_1, x_2)$ and time $t$ are scaled by $h_0$ and $\sqrt{h_0/g}$ with $h_0$ being  the average
depth of water in the undisturbed state and  $g$ being the acceleration of gravity.
The unknown function $\eta(\mathbf{x},t)$ and $\mathbf{v}(\mathbf{x},t)=(v_1(\mathbf{x},t), v_2(\mathbf{x},t))$,
 represent the dimensionless deviation of the water surface from its undisturbed state and the horizontal velocity at the level of $\sqrt{2/3}h_0$ of the depth of the undisturbed fluid,
 scaled by $h_0$ and $\sqrt{gh_0}$,
 respectively.

 The system (\ref{2d}) and its one-dimensional version
 \begin{equation}\label{abcd}
\begin{cases}
&\eta_t+u_x+(u\eta)_x+au_{xxx}-bu_{xxt}=0,\\
&u_t+\eta_x+uu_x+c\eta_{xxx}-du_{xxt}=0,
\end{cases}
\end{equation}
  named as Boussineq-type systems or $abcd-$systems, was introduced by Bona, Chen and Saut \cite{7,6} describe the small-amplitude and long wavelength gravity waves of an ideal, incompressible liquid. The systems (\ref{2d}) and (\ref{abcd}) are the first order approximations to the full Euler equations in the small parameters $\epsilon_1=A/h_0$ and $\epsilon_2=h_0^2/\lambda^2$, where $A$ is the typical wave amplitude and $\lambda$ is the typical wavelength
 (see Bona, Colin and Lannes \cite{95} for the rigorous proof). For the one-dimensional systems, they  are at the same order of accuracy with
  the well-known KdV (Korteweg-de Vries), BBM (Benjamin-Bona-Mahoney) and Boussinesq equations, but works for a wider range of problems, meaning the wave do not have to be traveling in one direction.

 The water wave patterns such as traveling waves and standing waves have been studied in different modelings in the past (c.f \cite{97,96,98,99,100,101,103,102}).
  In this paper, we aim to continue the study of Chen and Iooss on the 1-D standing wave \cite{97} of the $abcd$-system (\ref{abcd}) and the 2-D traveling wave  patterns
 of  (\ref{2d})  \cite{96,98}. Here, we look for periodic solutions in $(\mathbf{x},t)$, and introduce the scaled variables $\tilde{\mathbf{x}}=\frac{ \sqrt{6}}{\lambda}\mathbf{x}$ and $\tilde{t}=\frac{ \sqrt{6}}{T}t$ with $\lambda/\sqrt{6}$ and $T/\sqrt{6}$ being the wave length and the time period. By dropping the tilde for the re-scaled Boussinesq system of (\ref{2d}), one obtains
\begin{equation}\label{nonl}
\begin{cases}
\eta_t+\beta \nabla\cdot\textbf{v}+\beta \nabla \cdot (\eta \textbf{v} ) -\omega\Delta \eta_t=0,\\
\textbf{v}_t+\beta \nabla \eta + \frac{1}{2}\beta \nabla |\textbf{v}|^2-\omega \Delta \textbf{v}_t=0,
\end{cases}
\end{equation}
where $\omega=(1/\lambda)^2$ and $\beta$ is a positive parameter defined as
\begin{equation*} \beta=T/\lambda.
\end{equation*}
We are looking for solution of system (\ref{nonl}) under the form of 2-dimensional standing waves, i.e. $\eta$ and $\textbf{v}$ are periodic functions in both time $t$ and space $\textbf{x}$, where $\textbf{x}=(x_1,x_2)\in \R$.

This  paper is organized as follows: In section 2, we set the linearized problem and study its solution near zero. In section 3, we apply the Lyapunov-Schmidt method to obtain the bifurcation standing waves with bifurcating parameter $\beta$.

\section{Linearized operator}
For fixed $\omega>0$, we start by studying the linearized system
\begin{equation}\label{linear0}
    \begin{cases}
    \eta_t+\beta \nabla\cdot\textbf{v}-\omega\Delta \eta_t=\nabla \cdot \textbf{f}\\
    \textbf{v}_t+\beta \nabla \eta -\omega \Delta \textbf{v}_t= \nabla g,
    \end{cases}
\end{equation}
and we denote it as
\begin{equation}\label{linear}
    \mathcal{L} U= \mathcal{G} F
\end{equation}
where
\begin{align*}
    &U=(\eta,\textbf{v})^T, \ \ \ \ \ F=(g,\textbf{f})^T,\\
    &\mathcal{L} U=\left(
                     \begin{array}{c}
                       \eta_t+\beta\nabla\cdot \textbf{v} -\omega\Delta \eta_t\\
                       \textbf{v}_t+\beta \nabla \eta -\omega \Delta \textbf{v}_t\\
                     \end{array}
                   \right), \ \ \ \ \mathcal{G} F=(\nabla\cdot \textbf{f}, \nabla g)^T.
\end{align*}

To simplify the problem, we look for solutions $\eta$ being odd and $\mathbf{v}$ being even in $t$, it then  follows from \eqref{linear0} that $g$ is even and $\mathbf{f}$ is odd in time.
We now write the Fourier series,
\begin{equation}\label{fs1}
    \eta=\sum_{q\geq 0, \textbf{p}\in \Gamma} \eta_{\textbf{p}q}e^{i\textbf{p}\cdot \textbf{x}}\cos{qt},
\end{equation}
\begin{equation}\label{fs2}
    \textbf{v}=\sum_{q> 0, \textbf{p}\in \Gamma} \textbf{v}_{\textbf{p}q}e^{i\textbf{p}\cdot \textbf{x}}\sin{qt},
\end{equation}
\begin{equation}\label{fs3}
    g=\sum_{q\geq0, \textbf{p}\in \Gamma} g_{\textbf{p}q}e^{i\textbf{p}\cdot \textbf{x}}\cos{qt},
\end{equation}
\begin{equation}\label{fs4}
    \textbf{f}=\sum_{q> 0, \textbf{p}\in \Gamma} \textbf{f}_{\textbf{p}q}e^{i\textbf{p}\cdot \textbf{x}}\sin{qt},
\end{equation}
where $q\in \Z$ and $\Gamma$ is a hexagonal lattice of the $x_1x_2$-plane defined by two vectors $\mathbf{P}_1$ and $\mathbf{P}_2$ such that, for $\mathbf{p}\in\Gamma$, one can write
\begin{equation}\label{lattice}
 \mathbf{p}=(p_1,p_2)=n_1\mathbf{P}_1+n_2\mathbf{P}_2, n_1, n_2\in \Z.
\end{equation}
We denote  $ \theta_1$ and $-\theta_2$  to be angle of the two vectors $\mathbf{P}_1$ and $\mathbf{P}_2$  with $x_1$  axis for $0<\theta_1, \theta_2<\pi$. Thus, we can write
\begin{equation*}
\mathbf{P}_1:=k_1(1,\tau_1), \ \ \mathbf{P}_2:=k_2(1,-\tau_2),
\end{equation*}
where $k_j>0$ and $\tau_j=\tan{\theta_j}>0$ with $j=1,2$. According to \eqref{lattice}, one has
\begin{equation}\label{m1m2}
p_1:=p_1(n_1,n_2)=k_1n_1+k_2n_2 ,\ \ p_2:= p_2(n_1,n_2)= \tau_1 k_1n_1-\tau_2k_2n_2.
\end{equation}
In addition,  it is no harm to assume that $\mathbf{v}_{\mathbf{p}0}=\mathbf{f}_{\mathbf{p}0}=\mathbf{0}:=(0,0)$ for $\mathbf{p}\in \Gamma$ if they are needed in the paper.

We now substitute  \eqref{fs1}-\eqref{fs4} into \eqref{linear0} to solve the linear system, and obtain:
  \begin{itemize}
   \item for $q>0$ and $\mathbf{p}\in \Gamma$
\begin{equation}\label{force}
\begin{cases}
-q(1+\omega|\textbf{p}|^2)\eta_{\textbf{p}q}+i\beta \textbf{p} \cdot \textbf{v}_{\textbf{p}q}= i\textbf{p}\cdot \textbf{f}_{\textbf{p}q},\\
i\beta \textbf{p} \eta_{\textbf{p}q}+ q (1+\omega|\textbf{p}|^2)\textbf{v}_{\textbf{p}q}= i\textbf{p} g_{\textbf{p}q};
\end{cases}
\end{equation}
   \item for $q=0$ and $\mathbf{p}\neq (0,0)$, $\eta_{\mathbf{p}q}=\frac{1}{\beta}g_{\mathbf{p}q}$ and $\mathbf{v}_{\mathbf{p}q}=(0,0)$;
   \item for $q=0$ and $\mathbf{p}= (0,0)$,  $\eta_{\mathbf{p}q}$ is arbitrary and $\mathbf{v}_{\mathbf{p}q}=(0,0)$.
 \end{itemize}
 Set
\begin{equation*}
    \Delta(\textbf{p},q)=q^2(1+\omega|\textbf{p}|^2)^2-\beta^2|\textbf{p}|^2.
\end{equation*}
For $\Delta(\textbf{p},q)\neq 0$, according to \eqref{force}, we have
\begin{equation}\label{s1}
    \eta_{\textbf{p}q}=-\Delta^{-1}[ iq(1+\omega|\textbf{p}|^2)(\textbf{p}\cdot \mathbf{f}_{\textbf{p}q})+ \beta |\textbf{p}|^2 g_{\textbf{p}q}],
\end{equation}
\begin{equation}\label{s2}
\textbf{v}_{\textbf{p}q}=-\Delta^{-1}[-iq(1+\omega|\textbf{p}|^2)\textbf{p}g_{\textbf{p}q}+\beta \textbf{p}(\textbf{p}\cdot \mathbf{f}_{\textbf{p}q})].
\end{equation}
For $\Delta(\textbf{p},q)=0$, according to the second equation of (\ref{force}) we have
\begin{equation*}
\textbf{v}_{\textbf{p}q}=\frac{-i\beta \textbf{p} \eta_{\textbf{p}q}+i\textbf{p} g_{\textbf{p}q}}{ q (1+\omega|\textbf{p}|^2)},
\end{equation*}
then it follows from the first equation of (\ref{force}) that
\begin{equation*}
-\eta_{\textbf{p}q}\left(q^2(1+\omega|\textbf{p}|^2)^2-\beta^2|\textbf{p}|^2\right)=iq(1+\omega|\textbf{p}|^2)\textbf{p}\cdot \textbf{f}_{\textbf{p}q}+\beta |\textbf{p}|^2g_{\textbf{p}q}.
\end{equation*}
Hence, based on $q^2(1+\omega|\textbf{p}|^2)^2-\beta^2|\textbf{p}|^2=0$, i.e.  $q(1+\omega|\textbf{p}|^2)-\beta|\textbf{p}|=0$, we have the compatibility condition for the system (\ref{force}) in the form
\begin{equation*}
iq(1+\omega|\textbf{p}|^2)\textbf{p}\cdot \textbf{f}_{\textbf{p}q}+\beta |\textbf{p}|^2g_{\textbf{p}q}=0,
\end{equation*}
or
\begin{equation}\label{comp1}
   |\mathbf{p}|g_{\textbf{p}q}+i \textbf{p}\cdot \textbf{f}_{\textbf{p}q}=0.
\end{equation}
%
%
%
If such condition is satisfied, one has the solution of (\ref{force}) as,
\begin{align}
    &\eta_{\textbf{p}q}=c|\mathbf{p}|+\frac{1}{\beta}g_{\textbf{p}q}=c\sqrt{p_1^2+ p_2^2}+\frac{1}{\beta}g_{\textbf{p}q},\label{1}\\
    &\textbf{v}_{\textbf{p}q}=-i c\mathbf{p} =-ic(p_1,p_2)\label{2}
\end{align}
where $c$ is arbitrary in $\mathbb{C}$.

For $q>0$, the linearized operator $\mathcal L$ defined in (\ref{linear}) has a  non-trivial kernel if there exists a pair of $(\textbf{p},q)$ satisfying $\Delta(\textbf{p},q)=0$.
 The following lemmas provide the detail information on  $(\mathbf{p}, q)$ when a kernel exists.
\begin{lemma}\label{U}
Given $\beta>0$, let
  $$p_1:=k_1n_1+k_2n_2, \quad p_2:=\tau_1k_1n_1-\tau_2k_2n_2,$$
 then the  set
\begin{equation*}
    \Omega_{\beta}:=\{(n_1,n_2,q)\in \Z^2 \times \Z^+ ,q\left(1+\omega (p_1^2+p_2^2)\right)-\beta  \sqrt{p_1^2+p_2^2}=0\}
\end{equation*}
 is either empty or finite.
\end{lemma}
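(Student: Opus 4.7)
The plan is to view the defining relation of $\Omega_\beta$ as a system of two constraints: one constraint that forces $q$ and $|\mathbf{p}|$ into a bounded set (via the shape of the dispersion curve), and a second constraint that restricts the integer pairs $(n_1,n_2)$ to a compact ellipse. Finiteness then follows by noting that each compact set meets the integer lattice in finitely many points.

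First, I would rewrite the defining equation as
\begin{equation*}
q \;=\; \frac{\beta\,|\mathbf{p}|}{1+\omega|\mathbf{p}|^2}.
\end{equation*}
The right-hand side, viewed as a function of $r=|\mathbf{p}|\ge 0$, is maximized at $r_\ast=1/\sqrt{\omega}$ with maximum value $\beta/(2\sqrt{\omega})$. Hence any $(n_1,n_2,q)\in\Omega_\beta$ must satisfy $q\le \beta/(2\sqrt{\omega})$. In particular, if $\beta<2\sqrt{\omega}$ the set $\Omega_\beta$ is empty; otherwise only finitely many positive integers $q$ are admissible. For each such $q$, rearranging $q(1+\omega r^2)=\beta r$ gives the quadratic $q\omega r^2-\beta r+q=0$, whose (at most two) real roots in $r$ are
\begin{equation*}
r_\pm \;=\; \frac{\beta\pm\sqrt{\beta^2-4q^2\omega}}{2q\omega}.
\end{equation*}

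The remaining step is to show that, for any fixed $r\ge 0$, only finitely many lattice pairs $(n_1,n_2)\in\mathbb{Z}^2$ satisfy $p_1(n_1,n_2)^2+p_2(n_1,n_2)^2=r^2$. Expanding,
\begin{equation*}
Q(n_1,n_2)\;:=\;p_1^2+p_2^2\;=\;k_1^2(1+\tau_1^2)\,n_1^2 \,+\, 2k_1k_2(1-\tau_1\tau_2)\,n_1n_2\,+\,k_2^2(1+\tau_2^2)\,n_2^2.
\end{equation*}
This is $|n_1\mathbf{P}_1+n_2\mathbf{P}_2|^2$, so $Q$ is a positive semidefinite quadratic form; it is positive definite provided $\mathbf{P}_1$ and $\mathbf{P}_2$ are linearly independent. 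Since $\det(\mathbf{P}_1,\mathbf{P}_2)=-k_1k_2(\tau_1+\tau_2)\ne 0$ (recall $k_j>0$ and $\tau_j>0$), linear independence holds, and $Q$ is positive definite. The level set $\{(n_1,n_2)\in\mathbb{R}^2 : Q(n_1,n_2)=r^2\}$ is therefore an ellipse (or the origin, if $r=0$), which is compact and hence contains only finitely many lattice points.

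Combining the two steps, $\Omega_\beta$ is the union, over the finitely many admissible $q$ and over the at most two associated values $r_\pm$, of finitely many triples, and is therefore either empty or finite. The only nontrivial point is the positive-definiteness argument at the end, which is the real content of the lemma; the $q$-bound is just the standard observation that the dispersion relation $q=\beta r/(1+\omega r^2)$ is bounded on $[0,\infty)$.
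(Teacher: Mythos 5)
Your proof is correct, and its first two steps (bounding $q$ by $\beta/(2\sqrt{\omega})$ via the shape of $r\mapsto \beta r/(1+\omega r^2)$, then solving the quadratic $q\omega r^2-\beta r+q=0$ to get at most two admissible values of $r=\sqrt{p_1^2+p_2^2}$ per $q$) coincide exactly with the paper's. The only divergence is in the final lattice-counting step: the paper argues by a case split on the sign of $n_1n_2$, observing that $(k_1|n_1|+k_2|n_2|)^2\le K$ when $n_1n_2\ge 0$ and $(\tau_1k_1|n_1|+\tau_2k_2|n_2|)^2\le K$ when $n_1n_2<0$, each of which directly bounds $|n_1|,|n_2|$; you instead identify $p_1^2+p_2^2$ as the positive definite quadratic form $|n_1\mathbf{P}_1+n_2\mathbf{P}_2|^2$ (positive definiteness following from $\det(\mathbf{P}_1,\mathbf{P}_2)=-k_1k_2(\tau_1+\tau_2)\ne 0$) and conclude that its level sets are compact ellipses meeting $\Z^2$ in finitely many points. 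Both are valid; yours is more conceptual and generalizes immediately to any nondegenerate lattice, while the paper's is more elementary and hands you explicit bounds on $|n_1|,|n_2|$ in terms of $K$, $k_j$, $\tau_j$. No gaps.
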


\begin{proof}
For  $\beta$   positive, $(n_1,n_2,q)\in \Omega_{\beta} $ implies
\begin{equation*}
    q=\frac{\beta \sqrt{p_1^2+p_2^2}}{1+\omega (p_1^2+p_2^2)}\leq \frac{\beta}{\frac{1}{\sqrt{p_1^2+p_2^2}}+\omega \sqrt{p_1^2+p_2^2}}\leq \frac{\beta}{2\sqrt{\omega}}.
\end{equation*}
Hence, the only possible values are $q= 1,2,...,[\frac{\beta}{2\sqrt{\omega}}]$. For each  $q$ such that $(n_1,n_2,q)\in \Omega_{\beta} $, it implies
\begin{equation}\label{O}
    \omega  q  \left(\sqrt{p_1^2+p_2^2}\right)^2 -\beta \left(\sqrt{p_1^2+p_2^2}\right)+q=0.
\end{equation}
Thus, there are at most two possible values of $\sqrt{p_1^2+p_2^2}$.

Now, for given $K>0$, we consider the number of pairs  for $(n_1,n_2)\in \Z^2$ satisfying:
\begin{equation}\label{K}
    K=p_1^2+p_2^2=(k_1n_1+k_2n_2)^2+(\tau_1k_1n_1-\tau_2k_2n_2)^2.
\end{equation}

\begin{itemize}
  \item If $n_1\cdot n_2\geq 0$, according to \eqref{K} and $(n_1,n_2)\in \Z^2$, one has
\[(k_1|n_1|+k_2|n_2|)^2\leq K,\]
which follows that the number of pairs of $(n_1, n_2)$ is finite.
 \item If $n_1\cdot n_2<0$, according to \eqref{K} and $(n_1,n_2)\in \Z^2$, one has
\[(\tau_1k_1|n_1|+\tau_2k_2|n_2|)^2\leq K,\]
which follows that the number of pairs of $(n_1, n_2)$ is finite.
\end{itemize}
 Therefore, for each value of $\sqrt{p_1^2+p_2^2}$ (which is finite), we have finite choices of $(n_1,n_2)\in \Z^2$. The proof is now complete.
\end{proof}



We now study the norm of the operator $\mathcal{L}$.

\begin{lemma}\label{M}
Given $\beta>0$,
there exists a constant $M>0$, depending only on $\beta$, such that for any $(n_1,n_2,q)\in \Z^2\times \Z^+$, $(n_1,n_2,q)\notin \Omega_{\beta}$ with $(p_1,p_2)$ defined in (\ref{m1m2}),  we have
\begin{equation*}
    \frac{\sqrt{p_1^2+p_2^2}}{\left|q(1+\omega  (p_1^2+p_2^2))-\beta \sqrt{p_1^2+p_2^2}\right|}\leq M.
\end{equation*}
\end{lemma}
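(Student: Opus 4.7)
The plan is to split the admissible triples $(n_1, n_2, q) \notin \Omega_\beta$ into a large-$q$ regime, handled by direct analytic estimates on a one-variable quadratic, and a bounded-$q$ regime, reduced to a finite lattice-point computation in which the hypothesis $(n_1, n_2, q) \notin \Omega_\beta$ is used to prevent a vanishing denominator. Throughout, write $r := \sqrt{p_1^2 + p_2^2}$ and
\begin{equation*}
\phi_q(r) := q(1 + \omega r^2) - \beta r,
\end{equation*}
so the quantity to be bounded is $r/|\phi_q(r)|$. Viewing $\phi_q$ as a quadratic in $r$, its minimum over $[0,\infty)$ is $q - \beta^2/(4q\omega)$, attained at $r = \beta/(2q\omega)$; this minimum is strictly positive whenever $q > \beta/(2\sqrt{\omega})$.

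For the large-$q$ regime, I would fix $Q_0 \in \N$ with $Q_0 > 2\beta^2/\omega$ and consider $q \geq Q_0$. When $r \leq 2\beta/\omega$, $\phi_q(r) \geq q - \beta r \geq Q_0 - 2\beta^2/\omega \geq 1$, so $r/\phi_q(r) \leq 2\beta/\omega$. When $r \geq 2\beta/\omega$, then $\omega r - \beta \geq \omega r/2$, whence $\phi_q(r) \geq \omega r^2 - \beta r \geq \omega r^2/2$ and $r/\phi_q(r) \leq 2/(\omega r) \leq 1/\beta$. This yields a uniform bound $M_1 := \max(2\beta/\omega,\, 1/\beta)$ valid for all $q \geq Q_0$, independent of $(n_1, n_2)$.

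For the bounded-$q$ regime, there are only finitely many values $q \in \{1, \ldots, Q_0 - 1\}$. Fix such a $q$ and set $R_q := \max(1,\, 2\beta/(q\omega))$. For $r \geq R_q$ one has $q\omega r^2 \geq 2\beta r$, hence $\phi_q(r) \geq q\omega r^2/2$ and $r/|\phi_q(r)| \leq 2/(q\omega r) \leq 1$. For $r \leq R_q$, I would invoke the lattice-counting argument from the proof of Lemma~\ref{U}: the case split on the sign of $n_1 n_2$ gives either $(k_1|n_1| + k_2|n_2|)^2 \leq R_q^2$ or $(\tau_1 k_1|n_1| + \tau_2 k_2|n_2|)^2 \leq R_q^2$, so only finitely many $(n_1, n_2) \in \Z^2$ produce $r(n_1, n_2) \leq R_q$. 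On each such pair the hypothesis $(n_1, n_2, q) \notin \Omega_\beta$ forces $\phi_q(r(n_1, n_2)) \neq 0$, and taking the minimum of finitely many strictly positive numbers yields a finite bound $M_2(q)$. Setting $M := \max\bigl(M_1,\ \max_{1 \leq q < Q_0} M_2(q),\ 1\bigr)$ completes the proof.

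The main (mild) obstacle is precisely the bounded-$q$, bounded-$r$ case: purely analytic estimates do not suffice there because $\phi_q$ can come arbitrarily close to zero on the real line. One has to combine the discreteness of the hexagonal lattice (the finiteness from Lemma~\ref{U}) with the explicit exclusion of $\Omega_\beta$ to rule out a lattice-generated $r$ at which $\phi_q$ vanishes, which is what ultimately renders the supremum finite.
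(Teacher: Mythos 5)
Your proof is correct and follows essentially the same strategy as the paper's: a direct estimate yielding the uniform bound $1/\beta$ in the large regime (the paper splits on the single condition $q\sqrt{p_1^2+p_2^2}>2\beta/\omega$ rather than on $q$ and $r$ separately), combined with the lattice-finiteness argument and the exclusion of $\Omega_{\beta}$ to keep the denominator bounded away from zero on the remaining finite set. The only quibbles are two harmless arithmetic slips: choosing $Q_0>2\beta^2/\omega$ does not guarantee $Q_0-2\beta^2/\omega\geq 1$ (take $Q_0\geq 2\beta^2/\omega+1$ instead), and in the bounded-$q$, $r\geq R_q$ case the conclusion $2/(q\omega R_q)\leq 1$ should read $\leq 1/\beta$ (using $R_q\geq 2\beta/(q\omega)$), neither of which affects the finiteness of $M$.
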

\begin{proof}
We first consider the pairs $(n_1,n_2,q)\in \Z^2\times \Z^+$ with $(n_1,n_2,q)\notin \Omega_{\beta}$   satisfying $q \sqrt{p_1^2+p_2^2}> 2 \beta/\omega$, then
\begin{equation*}
     q(1+\omega (p_1^2+p_2^2))-\beta \sqrt{p_1^2+p_2^2}\geq q +\beta \sqrt{p_1^2+p_2^2}>0,
\end{equation*}
therefore,
\begin{align*}
     \frac{\sqrt{p_1^2+p_2^2}}{\left|q(1+\omega  (p_1^2+p_2^2))-\beta \sqrt{p_1^2+p_2^2}\right|}\leq   \frac{\sqrt{p_1^2+p_2^2}}{q +\beta \sqrt{p_1^2+p_2^2}}\leq \frac{1}{\beta}.
\end{align*}
Next, we consider the case for $(n_1,n_2,q)\in  \Z^2\times \Z^+$, $(n_1,n_2,q)\notin \Omega_{\beta}$ and $(n_1,n_2)\neq (0,0)$ satisfies
\begin{equation}\label{alp}
    q \sqrt{p_1^2+p_2^2}\leq 2 \beta/\omega.
\end{equation}
We claim that, in such a case, there are finite pairs of $(n_1,n_2,q)$.
\begin{description}
  \item[(A)] We notice that
  $\sqrt{p_1^2+p_2^2}=0$
  implies that $(n_1,n_2)=(0,0)$. According to  (\ref{m1m2}),
  \[p_1=k_1n_1+k_2n_2=0,\quad p_2=\tau_1k_1n_1-\tau_2k_2n_2=0\]
  and
\[\begin{bmatrix}
   k_1& k_2 \\
    \tau_1k_1& \tau_2k_2\end{bmatrix}=-(\tau_1+\tau_2)k_1k_2\neq 0,\]
     it follows that the only solution is the trivial solution. Therefore, $\sqrt{p_1^2+p_2^2}>0$ for any $(n_1,n_2)\neq (0,0) $.
  \item[(B)] There are finite $(n_1,n_2)$ satisfying \eqref{alp} for any $q\in\Z^+$. This can be seen by noticing $q\geq 1$ and
       $$\sqrt{p_1^2+p_2^2}\leq \frac{\beta}{2\omega}\cdot \frac1q\leq \frac{\beta}{2\omega},$$
       along with the process in \eqref{K}.
\end{description}
The proof of the claim now follows from combining
   \[ q\leq \frac{\beta}{2\omega}\cdot \frac{1}{\sqrt{p_1^2+p_2^2}}, \quad q\in  \Z^+\]
  since $\sqrt{p_1^2+p_2^2}>0$ for  $(n_1,n_2)\neq (0,0) $ (see (A)) and the conclusion in (B).
Because the number of pairs of $(n_1,n_2,q)$ is finite and the denominator
\begin{equation*}
    \left|q(1+\omega  (p_1^2+p_2^2))-\beta \sqrt{p_1^2+p_2^2}\right|\neq 0,
\end{equation*}
due to $(n_1,n_2,q)\notin \Omega_{\beta}$, one can obtain the upper bound, $M$, depending on $\beta$.

Finally, for $(n_1,n_2,q)\in  \Z^2\times \Z^+$, $(n_1,n_2)= (0,0)$ and $(n_1,n_2,q)\notin \Omega_{\beta}$, we can check the bound is $0$. The proof is now complete.
\end{proof}


Now, recall that
$$p_1=k_1n_1+k_2n_2, \quad p_2=\tau_1k_1n_1-\tau_2k_2n_2, \mbox{ with } n_1, n_2\in \Z $$
and set
$$ {\mathcal H}^s:=H^s\{(\R/(2\pi/p_1))\times (\R/(2\pi/p_2))\times(\R/(2\pi)\Z)\},$$
we introduce the Sobolev spaces
\begin{equation*}
    H^s_{\natural,e}=\left\{u=\sum_{\textbf{n} \in \Z^2, q\geq0} u_{\textbf{n}q} e^{ip_1x_1}e^{ip_2x_2}\cos{qt}\in {\mathcal H}^s, \mbox{ where } q\in \Z.\right\},
\end{equation*}
\begin{equation*}
    H^s_{\natural,o}=\left\{u=\sum_{\textbf{n} \in \Z^2, q>0} u_{\textbf{n}q} e^{ip_1x_1}e^{ip_2x_2}\sin{qt}\in {\mathcal H}^s, \mbox{ where }  q\in \Z.\right\}.
\end{equation*}
We  define an operator $\mathcal{T}$:
 \[U=\mathcal{T} F\]
 acting in $Q_s:=H^s_{\natural,e}\times H^s_{\natural,o} \times H^s_{\natural,o}$, for any $s\geq 0$, solving
 \[\mathcal{L} U=\mathcal{G}F,\]
 provided the compatibility condition
 \begin{equation}\label{compatible11}
 |\mathbf{p}|g_{\textbf{n}q}+i \textbf{p}\cdot \textbf{f}_{\textbf{n}q}=0,\quad \mbox{for}\ \ \mathbf{p}=(p_1,p_2), \ \ \mathbf{n}=(n_1,n_2)
 \end{equation}
is satisfied, such that,

\begin{itemize}
  \item for $(n_1,n_2,q)$ satisfying  $q\neq 0$ and $\Delta(n_1,n_2,q)\neq 0$,
\begin{align}\label{t1}
&\mathcal{T} F_{\textbf{n}q}= U_{\textbf{n}q}=(\eta_{\textbf{n}q},\textbf{v}_{\textbf{n}q}),
\end{align}
with $\eta_{\textbf{n}q}$ and $\textbf{v}_{\textbf{n}q}$ given by (\ref{s1}) and (\ref{s2}), that is,
\begin{align}
  \eta_{\textbf{n}q}=&-\Delta^{-1}(p_1,p_2)\cdot\big[ iq(1+\omega (p_1^2+p_2^2)) \mathbf{f}_{\textbf{n}q}+ \beta (p_1,p_2) g_{\textbf{n}q}\big],\label{s11}\\
\textbf{v}_{\textbf{n}q}=&-\Delta^{-1}(p_1,p_2)\big[-iq(1+\omega (p_1^2+p_2^2))g_{\textbf{n}q}
+\beta (p_1,p_2)\cdot \mathbf{f}_{\textbf{n}q}\big];\label{s22}
\end{align}
  \item for $(n_1,n_2,q)$ satisfying $\Delta(n_1,n_2,q)=0$ or $q=0$ but $(n_1,n_2,q)\neq(0,0,0)$,
\begin{align}\label{t2}
 \mathcal{T}F_{\textbf{n}q}= (\frac{1}{\beta}g_{\textbf{n}q},0,0);
\end{align}
  \item for $(n_1,n_2,q)=(0,0,0)$
\begin{align}\label{t3}
 \mathcal{T}F_{\textbf{n}q}= (0,0,0).
\end{align}
\end{itemize}
Notice that the operator $\mathcal{T}$ is defined here, even for $F=(g,\mathbf{f})$ not satisfying the compatibility condition.
\begin{lemma}
Given positive numbers  $\beta$, if $\Omega_{\beta}$ has finite elements and $\mathcal{T}$ is defined as in (\ref{t1})-(\ref{t3}),  then the  operator $\mathcal{T}$ is bounded from  $H^s_{\natural,e}\times H^s_{\natural,o} \times H^s_{\natural,o}$ to  $H^s_{\natural,e}\times H^s_{\natural,o} \times H^s_{\natural,o}$ for any $s\geq 0$.
\end{lemma}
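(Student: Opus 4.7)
The plan is to reduce boundedness of $\mathcal{T}$ on the product Sobolev space to a uniform pointwise bound on the Fourier side. Since $\mathcal{T}$ is defined mode by mode via (\ref{t1})--(\ref{t3}), and the three factors of $H^s_{\natural,e}\times H^s_{\natural,o}\times H^s_{\natural,o}$ all carry the common weight $(1+p_1^2+p_2^2+q^2)^{s}$ on the Fourier side, it suffices to produce a constant $C(\beta)$, independent of $(\mathbf{n},q)$ and of $s$, such that
$$|(\mathcal{T}F)_{\mathbf{n}q}|\leq C(\beta)\,|F_{\mathbf{n}q}|\qquad\text{for every } (\mathbf{n},q)\in\Z^2\times\Z.$$
Multiplying by $(1+|\mathbf{p}|^2+q^2)^{s}$ and summing over $(\mathbf{n},q)$ then yields $\|\mathcal{T}F\|_{H^s}\leq C(\beta)\|F\|_{H^s}$ at any regularity $s\geq 0$.

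The easy cases are disposed of directly from the definition. The mode $(0,0,0)$ contributes nothing by (\ref{t3}). The modes covered by (\ref{t2})---i.e.\ $q=0$ with $\mathbf{n}\neq(0,0)$ together with the finitely many triples of $\Omega_\beta$ (finite by Lemma \ref{U})---give $\mathcal{T}F_{\mathbf{n}q}=(g_{\mathbf{n}q}/\beta,0,0)$, whence the trivial pointwise bound with constant $1/\beta$.

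For the main case $q\neq 0$ and $\Delta(\mathbf{p},q)\neq 0$ I would apply the factorization
$$\Delta(\mathbf{p},q)=\bigl[q(1+\omega|\mathbf{p}|^2)-\beta|\mathbf{p}|\bigr]\bigl[q(1+\omega|\mathbf{p}|^2)+\beta|\mathbf{p}|\bigr]$$
to the two Fourier symbols appearing after taking moduli in (\ref{s11})--(\ref{s22}):
$$\frac{|\mathbf{p}|\,q(1+\omega|\mathbf{p}|^2)}{|\Delta|}=\frac{|\mathbf{p}|}{\bigl|q(1+\omega|\mathbf{p}|^2)-\beta|\mathbf{p}|\bigr|}\cdot\frac{q(1+\omega|\mathbf{p}|^2)}{q(1+\omega|\mathbf{p}|^2)+\beta|\mathbf{p}|},$$
$$\frac{\beta|\mathbf{p}|^2}{|\Delta|}=\frac{|\mathbf{p}|}{\bigl|q(1+\omega|\mathbf{p}|^2)-\beta|\mathbf{p}|\bigr|}\cdot\frac{\beta|\mathbf{p}|}{q(1+\omega|\mathbf{p}|^2)+\beta|\mathbf{p}|}.$$
In each product the first factor is at most $M$ by Lemma \ref{M} (this is exactly where the hypothesis $(\mathbf{n},q)\notin\Omega_\beta$ enters), while the second factor, being the ratio of one nonnegative term to its sum with another nonnegative term, is at most $1$. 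Inserting these bounds back into (\ref{s11})--(\ref{s22}) gives $|\eta_{\mathbf{n}q}|+|\mathbf{v}_{\mathbf{n}q}|\leq 2M\bigl(|g_{\mathbf{n}q}|+|\mathbf{f}_{\mathbf{n}q}|\bigr)$, so the uniform pointwise bound holds in every case with $C(\beta)=\max\{2M,1/\beta\}$.

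I do not expect a genuine obstacle: the analytic content has already been absorbed into Lemma \ref{M}, and what remains is bookkeeping. The only bit of care is choosing the symmetric factorization of $\Delta$ above so that Lemma \ref{M} applies directly, rather than resorting to cruder estimates that would degrade the $s$-independence of the constant.
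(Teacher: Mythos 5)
Your proposal is correct and follows essentially the same route as the paper: reduce to a uniform Fourier-mode bound, dispose of the modes in (\ref{t2})--(\ref{t3}) directly, and for the generic modes factor $\Delta(\mathbf{p},q)$ so that one factor is controlled by Lemma \ref{M} and the other cancels against the numerator. Your write-up merely makes explicit the factorization step that the paper performs implicitly between the two lines of its displayed estimate.
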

\begin{proof}
According to (\ref{s11}) and (\ref{s22}) for $\Delta(\mathbf{n},q)\neq 0$ and $q\neq 0$, we have
\begin{align*}
|\eta_{\textbf{n}q}|+|\textbf{v}_{\textbf{n}q}|\leq&  \frac{C\left(\left|q(p_1,p_2)\right|(1+\omega (p_1^2+p_2^2))+\beta (p_1^2+p_2^2)\right)}{q^2\left(1+\omega (p_1^2+p_2^2)\right)^2-\beta^2 (p_1^2+p_2^2)}(|g_{\textbf{n}q}|+|\textbf{f}_{\textbf{n}q}|)\\
\leq&\frac{C\sqrt{p_1^2+p_2^2}}{\left|q(1+\omega  (p_1^2+p_2^2))-\beta \sqrt{p_1^2+p_2^2}\right|}(|g_{\textbf{n}q}|+|\textbf{f}_{\textbf{n}q}|).
\end{align*}
Then, it follows from Lemma \ref{M}
\begin{equation*}
|\eta_{\textbf{n}q}|+|\textbf{v}_{\textbf{n}q}|\leq C M(|g_{\textbf{n}q}|+|\textbf{f}_{\textbf{n}q}|).
\end{equation*}
The proof of case in \eqref{t2} and \eqref{t3} can also be readily checked.
\end{proof}

We now move on to consider the kernel of $\mathcal{L}_0:=\mathcal{L}_{\beta_0}$, assume that $\beta_0$ is given such that $\Omega_{\beta_0}$ has finite elements. We  denote the finite pairs of $(n_1,n_2,q)\in \Omega_{\beta_0}$  as $(n_1^{(j)},n_2^{(j)},q^{(l)})$, for $n_1^{(j)}$, $n_2^{(j)} \in \Z$, $q^{(l)}\in \Z^+$ and $j=1,2,...,N_1$, $l=1,2,...N_2$. In addition,  we shall notice that if $(n_1,n_2,q) \in \Omega_{\beta_0}$, then $(-n_1,-n_2,q)\in \Omega_{\beta_0}$. Let
\begin{align*}
    &\xi_{j,l}:=\xi_{(n_1^{(j)},n_2^{(j)},q^{(l)})},\ \ \ \ \overline{\xi_{j,l}}:=\xi_{(-n_1^{(j)},-n_2^{(j)},q^{(l)})},
        \end{align*}
where
\begin{align*}
\xi_{(n_1^{(j)},n_2^{(j)},q^{(l)})}=\bigg(\sqrt{(p^{(j)}_1)^2+(p^{(j)}_2)^2}\cos{q^{(l)}t},-i&p_1^{(j)}\sin{q^{(l)}t},\\
&-i  p_2^{(j)}\sin{q^{(l)}t}\bigg)e^{ip_1^{(j)}x_1}e^{ip_2^{(j)} x_2}
\end{align*}
with $p^{(j)}_1=k_1n_1^{(j)}+k_2n_2^{(j)}$ and $p^{(j)}_2=\tau_1 k_1n_1^{(j)}+\tau_2 k_2n_2^{(j)}$
for $j=1,2,...,N_1$ and $l=1,2,...,N_2$.
 It then follows from (\ref{1}) and (\ref{2}) that
\begin{equation*}
    \mathcal{L}_0  \xi_{j,l}=0, \quad \mathcal{L}_0  \overline{\xi_{j,l}}=0.
\end{equation*}
Therefore, the kernel of ${\mathcal L}_0$ is spanned by $$\{\zeta_0 ,\xi_{j,l}, \overline{\xi_{j,l}}, j=1,2,...,N_1, l=1,2,...N_2\}$$
where $\zeta_0:= (1,0,0)$ is the ``trivial" solution of \eqref{linear0}.

Next, we rewrite the compatibility conditions \eqref{compatible11}. According to the scalar product in $H^0_{\natural,e}\times H^0_{\natural,o} \times H^0_{\natural,o}$, the compatibility condition of $\mathcal{L}_0$ can read as
\begin{equation}\label{compatible1}
 \langle F,\xi_{(\textbf{n},q)}\rangle=0,\ \ \ \ \mbox{for } \Delta(\textbf{n},q)=0,
\end{equation}
 where  $F\in Q_s$ and
\begin{equation*}
    F=\sum_{(\textbf{n},q)\in \Z^2\times \N_0} F_{\textbf{n}q} e^{ip_1x_1}e^{ip_2x_2},
\end{equation*}
with
\begin{equation*}
    F_{\textbf{n}q}=(g_{\textbf{n}q}\cos{qt},(\textbf{f}_{\textbf{n}q})_1\sin{qt},(\textbf{f}_{\textbf{n}q})_2\sin{qt}),
\end{equation*}
for $\N_0=\N\cup\{0\}$.  Hence, for $\xi_{j,l}\in Q_s$ with any $s$. It follows that the compatibility conditions (\ref{compatible1}) can now be reduced to
\begin{equation*}
    \langle F,\xi_{j,l}\rangle=0, \ \ \ \ \  \langle F,\overline{\xi_{j,l}}\rangle=0, \ \ \ \ \mbox{   for } j=1,2,...N_1,  \ \ l=1,2...N_2.
\end{equation*}
Finally, according to the previous settings, we have the following theorem.

\begin{theorem}\label{operator}
Assume that $\beta_0$ is such that $\Omega_{\beta_0}$ have finite elements \[(n_1^{(j)},n_2^{(j)},q^{(l)})\]
in $\Z^2\times \Z^+$ with $j=1,2,...,N_1$ and $l=1,2..,N_2$. Then, for any given
\begin{equation*}
    F=(g,\mathbf{f})\in Q_s:=H^{s}_{\natural,e}\times H^{s}_{\natural,o} \times H^{s}_{\natural,o},\ \ s\geq 0,
\end{equation*}
such that it satisfies the following compatibility conditions
\begin{equation*}
    \langle F,\xi_{j,l}\rangle=0, \ \ \ \ \  \langle F,\overline{\xi_{j,l}}\rangle=0, \ \ \ \ \mbox{   for } \ \ j=1,2,...,N_1,\ \ l=1,2,...,N_2,
\end{equation*}
the general solution $U=(\eta,\textbf{v})\in Q_s$ of the system
\begin{equation*}
    L_0 U= \mathcal{G} F
\end{equation*}
is given by
\begin{equation*}
    U=\mathcal{T}F +\sum^{N_1}_{j=1}\sum^{N_2}_{l=1} \left(A_{j,l}\xi_{j,l}+\overline{A_{j,l}\xi_{j,l}}\right)+C \zeta_0
\end{equation*}
where
\begin{align*}
    &\zeta_0=(1,0,0),\\
    &\xi_{j,l}=\left(\sqrt{(p^{(j)}_1)^2+(p^{(j)}_2)^2}\cos{q^{(l)}t},-ip_1^{(j)}\sin{q^{(l)}t},-i p_2^{(j)}\sin{q^{(l)}t}\right)e^{ip_1^{(j)}x_1}e^{ip_2^{(j)} x_2},
\end{align*}
with  $p^{(j)}_1=k_1n_1^{(j)}+k_2n_2^{(j)}$ and $p^{(j)}_2=\tau_1 k_1n_1^{(j)}+\tau_2 k_2n_2^{(j)}$, $A_{j,l}\in \mathbb{C}$, $C\in \R$, $j=1,2,...,N_1 $ and $l=1,2,...,N_2$.
\end{theorem}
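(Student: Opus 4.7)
The plan is to verify the representation by reducing the PDE $\mathcal{L}_0 U=\mathcal{G}F$ to decoupled algebraic systems on Fourier modes and then exploiting linearity. I would proceed in three stages: first translate the compatibility conditions into the stated inner-product form, second verify that $\mathcal{T}F$ is a particular solution, and third compute $\ker\mathcal{L}_0$ explicitly with the reality of $U$ enforced.

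For the compatibility step, I would expand $F$ in its Fourier series, substitute the explicit form of $\xi_{j,l}$ into the scalar product on $H^0_{\natural,e}\times H^0_{\natural,o}\times H^0_{\natural,o}$, and use orthogonality of the Fourier basis to evaluate $\langle F,\xi_{j,l}\rangle$ as a nonzero multiple of $|\mathbf{p}^{(j)}|\,g_{\mathbf{n}^{(j)}q^{(l)}}+i\mathbf{p}^{(j)}\cdot\mathbf{f}_{\mathbf{n}^{(j)}q^{(l)}}$. The pairing $\langle F,\overline{\xi_{j,l}}\rangle$ produces the analogous condition at $(-\mathbf{n}^{(j)},q^{(l)})$. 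Together these are exactly \eqref{compatible11} at every resonant triple in $\Omega_{\beta_0}$ and its reflection through the origin (which is also in $\Omega_{\beta_0}$ by the remark preceding the theorem).

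Next, I would check mode-by-mode that $\mathcal{T}F$ solves $\mathcal{L}_0 U=\mathcal{G}F$. On the branch $q\neq 0$, $\Delta(\mathbf{n},q)\neq 0$, the formulas \eqref{s11}--\eqref{s22} were derived from \eqref{force} by Cramer's rule, so they solve that system identically. On the resonant branch $\Delta(\mathbf{n},q)=0$, substituting $(g_{\mathbf{n}q}/\beta,\mathbf{0})$ into \eqref{force} and applying $q(1+\omega|\mathbf{p}|^2)=\beta|\mathbf{p}|$ reduces the first equation to the compatibility identity $|\mathbf{p}|g_{\mathbf{n}q}+i\mathbf{p}\cdot\mathbf{f}_{\mathbf{n}q}=0$, while the second equation becomes a tautology. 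The $q=0$ and $(\mathbf{n},q)=(0,0,0)$ cases are immediate from \eqref{t2} and \eqref{t3}, and boundedness of $\mathcal{T}F$ in $Q_s$ is the content of the preceding lemma.

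Finally, for the kernel, a vector $U\in Q_s$ lies in $\ker\mathcal{L}_0$ iff each Fourier coefficient $U_{\mathbf{n}q}$ solves the homogeneous version of \eqref{force}. Where the coefficient matrix is invertible, i.e.\ $\Delta(\mathbf{n},q)\neq 0$ and $q\neq 0$, this forces $U_{\mathbf{n}q}=\mathbf{0}$; at the finitely many resonant triples $(\mathbf{n}^{(j)},q^{(l)})\in\Omega_{\beta_0}$ the homogeneous system admits exactly the one-complex-parameter family $(c|\mathbf{p}|,-ic\mathbf{p})$ displayed in \eqref{1}--\eqref{2}; and at $(\mathbf{n},q)=(0,0,0)$ the only free mode is the real constant $C\zeta_0$. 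Since $\eta$ and $\mathbf{v}$ are real, the coefficients at $\mathbf{n}^{(j)}$ and $-\mathbf{n}^{(j)}$ must be complex conjugates, which yields the combination $A_{j,l}\xi_{j,l}+\overline{A_{j,l}\xi_{j,l}}$ with a single $A_{j,l}\in\mathbb{C}$. Writing any solution $U$ as $\mathcal{T}F+(U-\mathcal{T}F)$ then produces the stated representation. The main bookkeeping subtlety, rather than a genuine obstacle, is tracking the reality constraint in this last step so that one obtains a single complex parameter per resonant pair rather than two independent ones.
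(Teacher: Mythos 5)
Your proposal is correct and follows essentially the same route as the paper: the theorem is stated there as a direct consequence of the preceding mode-by-mode Fourier analysis (the solvability formulas \eqref{s1}--\eqref{s2}, the resonant-branch compatibility condition \eqref{comp1} with kernel elements \eqref{1}--\eqref{2}, the definition and boundedness of $\mathcal{T}$, and the rewriting of \eqref{compatible11} as the orthogonality conditions \eqref{compatible1}), which is exactly what you reassemble. Your explicit handling of the reality constraint giving one complex parameter $A_{j,l}$ per conjugate pair is a point the paper leaves implicit but is consistent with its statement.
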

\begin{remark}
For simplicity, in the next section we only consider the case for $\beta_0$ such that we only have  $(0, \pm 1, q)$ and $(\pm 1, 0, q)$ in $\Omega_{\beta_0}$. However, the same strategy can be carried out for more kernels.
\end{remark}

\section{Bifurcation problem}

The aim of this section is to use the Lyapunov-Schmidt method to obtain  bifurcation standing waves. Assume that, for given $\beta_0>0$, $(\pm1,0,q)$ and $(0,\pm1,q)$ are the only elements in $\Omega_{\beta_0}$. We denote the relative kernel for ${\mathcal L}_0$ as $\zeta_0$, $\xi_1$, $\overline{\xi}_1$, $\xi_2$ and $\overline{\xi}_2$.  Let us consider (\ref{nonl}) for parameters $ \beta= \beta_0+\mu$ with $\mu$ close to $0$. We look for nontrivial triple periodic solution $(\eta,\mathbf{v})$ in $H^{s}_{\natural,e}\times H^{s}_{\natural,o} \times H^{s}_{\natural,o}$. One shall notice that for $k\geq 2$ (Sobolev imbedding theorem)
\begin{equation*}
    (\frac12|\mathbf{v}|^2,\eta \mathbf{v})\in H^{s}_{\natural,e}\times H^{s}_{\natural,o} \times H^{s}_{\natural,o},
\end{equation*}
hence, we write the full system as
\[{\mathcal L}_0 U={\mathcal G}F\]
where  $F= \left( \begin{array}{c}
                       g \\
                       \textbf{f}\\
                     \end{array}
                   \right)$ and
\begin{align*}
&g=-\mu \eta-\frac12(\beta_0+\mu)|\mathbf{v}|^2,\\
&\mathbf{f}=-\mu \textbf{v}-(\beta_0+\mu)\eta \mathbf{v}.
\end{align*}
Then $F$ has the properties required by Theorem \ref{operator} once the compatibility conditions are satisfied.

Let $U=\left( \begin{array}{c}
                       \eta \\
                       \textbf{v}\\
                     \end{array}
                   \right)\in Q_s$, we can write the system as
\begin{equation}\label{sys1}
    \mathcal{L}_0 U =\mathcal{G}\left[-\mu U +(\beta_0+\mu) B(U,U)\right],
\end{equation}
where
\begin{equation*}
    B(U,U)=(\frac12|\mathbf{v}|^2,\eta \mathbf{v})^T.
\end{equation*}
Recall that the kernel of ${\mathcal L}_0=span\{\zeta_0, \xi_1, \bar{\xi}_1,\xi_2, \bar{\xi}_2\}$ where
\begin{align*}
\zeta_0&=(1,0,0),\\
\xi_1&=\left(k_1\sqrt{1+\tau_1^2}\cos{qt},-ik_1\sin{qt},-i\tau_1k_1\sin{qt}\right)e^{ik_1x_1}e^{i\tau_1 k_1 x_2},\\
\xi_2&=\left(k_2\sqrt{1+\tau_2^2}\cos{qt},-ik_2\sin{qt},i\tau_2k_2\sin{qt}\right)e^{ik_2x_1}e^{-i\tau_2 k_2 x_2}.
\end{align*}
We now decompose $U\in Q_s$ as
\begin{equation}\label{u0}
    U=X+V:=A_1 \xi_1+\bar{A}_1\bar{\xi}_1+A_2\xi_2+\bar{A}_2\bar{\xi}_2+C \zeta_0+V.
\end{equation}
The unknown $A_1$, $A_2$, $V$ are functions of parameter  $\mu$. In addition,  $V$ has the same properties as $\mathcal{T}F$ defined in \eqref{t2}-\eqref{t3}, more precisely,
\begin{align*}
V_{\mathbf{n}q}&=(0,0,0), \quad \mbox{for}\quad  (\mathbf{n}, q)=(0,0,0);\\
V_{\mathbf{n}q}&=(\eta_{\mathbf{n}q},0,0), \quad \mbox{for} \quad  \Delta(\mathbf{n},q)=0 \quad \mbox{or} \quad q=0 \quad \mbox{but}  \quad (\mathbf{n}, q)\neq (0,0,0).
\end{align*}
 According to Theorem \ref{operator}, if  the compatibility conditions
\begin{equation}\label{e20}
    \langle\mu U+(\beta_0+\mu)B(U,U),\xi_1\rangle=0
\end{equation}
and
\begin{equation}\label{e30}
    \langle\mu U +(\beta_0+\mu)B(U,U),\xi_2\rangle=0
\end{equation}
are satisfied, we can substitute \eqref{u0} into the nonlinear system \eqref{sys1} and obtain
\begin{equation}\label{e1}
    V+\mu  \mathcal{T} U+(\beta_0+\mu) \mathcal{T}B(U,U)=0,
\end{equation}
Here, the Lyapumov-Schmidt reduction is applied and \eqref{e1} is the projected equation.  We seek for a ``solution'' from the ``projection'' equation \eqref{e1} and
 the compatibility conditions will lead to relations between
  $A_1$, $A_2$, $C$ and $\mu$.  For more details of the Lyapunov-Schmidt method, the reader may refer to \cite{iooss}.

Now, It follows from \eqref{e1} and the implicit function theorem for $A_1$, $A_2$, $C$ and $\mu$ close enough to 0 that
\begin{equation*}
    V=V(A_1, A_2, C ,\mu) \in  Q_s,
\end{equation*}
where
\begin{align}
    V=-\mu\mathcal{T}X
    -\beta_0\mathcal{T}B(X,X)+O\left(|\mu|^2\|X\|+|\mu|\|X\|^2+\|X\|^3\right)\nonumber.
\end{align}
Let us denote
\begin{equation}\label{u}
U=X+V:=X-V_1-V_2+O\left(|\mu|^2\|X\|+|\mu|\|X\|^2+\|X\|^3\right),
\end{equation}
where
\begin{align} \label{V12}
V_1=\mu\mathcal{T}X, \quad
V_2=\beta_0\mathcal{T}B(X,X):=V_3+V_4.
\end{align}
Then, with simple but tedious computations, we obtain
\begin{align*}
    V_1=\frac{1}{\beta_0}\mu   \Big(&A_1 k_1 \sqrt{1+\tau_1^2} e^{ik_1x_1}e^{ik_1\tau_1 x_2}+\bar{A}_1 k_1 \sqrt{1+\tau_1^2}e^{-ik_1x_1}e^{-ik_1\tau_1 x_2}\\
    &+A_2 k_2 \sqrt{1+\tau_2^2}e^{ik_2x_1}e^{-ik_2\tau_2 x_2}+\bar{A}_2k_3 \sqrt{1+\tau_2^2}e^{-ik_2x_1}e^{ik_2\tau_2 x_2}\Big)\zeta_0\cos{qt},
 \end{align*}
 \begin{align*}
  V_3=&-\frac{1}{4}\Big[k_1^2(1+\tau_1^2)\big(A_1^2e^{2ik_1x_1}e^{2ik_1\tau_1 x_2}+\bar{A}_1^2  e^{-2ik_1x_1}e^{-2ik_1\tau_1 x_2}\big)+k_2^2(1+\tau_2^2)\big(A_2^2\\
&\cdot e^{2ik_2x_1}e^{-2ik_2\tau_2 x_2}+\bar{A}_2^2e^{-2ik_2x_1}e^{2ik_2\tau_2 x_2}\big)+2k_1k_2(1-\tau_1\tau_2)\big(A_1A_2 e^{i(k_1+k_2) x_1}\\
&\cdot e^{i(k_1\tau_1-k_2\tau_2) x_2}+\bar{A}_1\bar{A}_2e^{-i(k_1+k_2) x_1}e^{-i(k_1\tau_1-k_2\tau_2) x_2}-A_1\bar{A}_2e^{i(k_1-k_2) x_1}e^{i(k_1\tau_1+k_2\tau_2) x_2}\\
    &-\bar{A}_1A_2e^{-i(k_1-k_2) x_1} e^{-i(k_1\tau_1+k_2\tau_2) x_2}\big)\Big]\zeta_0,
    \end{align*}
\begin{align*}
V_4=&\beta_0\Big[A_1^2e^{2ik_1x_1}e^{2ik_1\tau_1 x_2}\delta_1+\bar{A}_1^2e^{-2ik_1x_1}e^{-2ik_1\tau_1 x_2}\bar{\delta}_1+A_2^2e^{2ik_2x_1}e^{-2ik_2\tau_2 x_2}\delta_2\\
&+\bar{A}_2^2e^{-2ik_2x_1}e^{2ik_2\tau_2 x_2}\bar{\delta}_2+A_1A_2e^{i(k_1+k_2)x_1}e^{i(k_1\tau_1-k_2\tau_2) x_2}\delta_3+\bar{A}_1\bar{A}_2e^{-i(k_1+k_2)x_1}\\
&\cdot e^{-i(k_1\tau_1-k_2\tau_2) x_2}\bar{\delta}_3+A_1\bar{A}_2e^{i(k_1-k_2)x_1}e^{i(k_1\tau_1+k_2\tau_2) x_2}\delta_4+\bar{A}_1A_2e^{-i(k_1-k_2)x_1}\\
&\cdot e^{-i(k_1\tau_1+k_2\tau_2) x_2}\bar{\delta}_4\Big],
\end{align*}
where
\begin{equation*}
    \delta_1=(a_1\cos{2qt},ia_2\sin{2qt},i\tau_1 a_2\sin{2qt}),
    \delta_2=(a_3\cos{2qt},ia_4\sin{2qt},-i\tau_2a_4\sin{2qt}),
\end{equation*}
\begin{equation*}
    \delta_3=(b_1\cos{2qt},ib_2\sin{2qt},ib_3\sin{2qt}),\ \ \ \delta_4=(b_4\cos{2qt},ib_5\sin{2qt},ib_6\sin{2qt}),
\end{equation*}
\begin{equation*}
        a_1=-\frac{\beta_0 k_1^4(1+\tau_1^2)^2+2qk_1^3(1+\tau_1^2)^{\frac32}(1+4\omega k_1^2(1+\tau_1^2))}{4q^2(1+4\omega k_1^2 (1+\tau_1^2))^2-4\beta_0^2k_1^2(1+\tau_1^2)},
\end{equation*}
\begin{equation*}
    a_2=\frac{2\beta_0 k_1^4(1+\tau_1^2)^{\frac32}+qk_1^3(1+\tau_1^2)(1+4\omega k_1^2(1+\tau_1^2))}{4q^2(1+4\omega k_1^2 (1+\tau_1^2))^2-4\beta_0^2k_1^2(1+\tau_1^2)},
\end{equation*}
\begin{equation*}
    a_3=-\frac{\beta_0 k_2^4(1+\tau_2^2)^2+2qk_2^3(1+\tau_2^2)^{\frac32}(1+4\omega k_2^2(1+\tau_2^2))}{4q^2(1+4\omega k_2^2 (1+\tau_2^2))^2-4\beta_0^2k_2^2(1+\tau_2^2)},
\end{equation*}
\begin{equation*}
    a_4=\frac{2\beta_0 k_2^4(1+\tau_2^2)^{\frac32}+qk_2^3(1+\tau_2^2)(1+4\omega k_2^2(1+\tau_2^2))}{4q^2(1+4\omega k_2^2 (1+\tau_2^2))^2-4\beta_0^2k_2^2(1+\tau_2^2)},
\end{equation*}
\begin{align*}
        b_1&=\frac{1}{2d_1}\bigg\{ 2q\big[1+\omega \left((k_1+k_2)^2+(k_1\tau_1-k_2\tau_2\right)^2)\big]\Big[-k_1k_2(k_1+k_2)(\sqrt{1+\tau_1^2}\\
        &+\sqrt{1+\tau_2^2})+k_1k_2(k_1\tau_1-k_2\tau_2)
        (\tau_2\sqrt{1+\tau_1^2}-\tau_1\sqrt{1+\tau_2^2})\Big]-\beta_0 k_1k_2(1\\
        &-\tau_1\tau_2)\left[(k_1+k_2)^2+(k_1\tau_1-k_2\tau_2)^2\right]\bigg\},
\end{align*}
\begin{align*}
        b_2&=\frac{k_1+k_2}{2d_1}\bigg\{ 2qk_1k_2(1-\tau_1\tau_2)\big[1+\omega \left((k_1+k_2)^2+(k_1\tau_1-k_2\tau_2\right)^2)\big]\\
        &+\beta_0\Big[k_1k_2(k_1+k_2)\big(\sqrt{1+\tau_1^2}+\sqrt{1+\tau_2^2}\big)-k_1k_2
        (k_1\tau_1-k_2\tau_2)\\
        &\cdot \big(\tau_2\sqrt{1+\tau_1^2}-\tau_1\sqrt{1+\tau_2^2}\big)\Big]\bigg\},
\end{align*}
\begin{equation*}
    b_3= \frac{k_1\tau_1-k_2\tau_2}{k_1+k_2}b_2,
\end{equation*}
\begin{align*}
        b_4&=\frac{1}{2d_2}\bigg\{ 2q\big[1+\omega \left((k_1-k_2)^2+(k_1\tau_1+k_2\tau_2\right)^2)\big]\Big[k_1k_2(k_1-k_2)(\sqrt{1+\tau_1^2}\\
        &-\sqrt{1+\tau_2^2})-k_1k_2(k_1\tau_1+k_2\tau_2)
        (\tau_2\sqrt{1+\tau_1^2}+\tau_1\sqrt{1+\tau_2^2})\Big]+\beta_0 k_1k_2(1\\
        &-\tau_1\tau_2)\left[(k_1-k_2)^2+(k_1\tau_1+k_2\tau_2)^2\right]\bigg\},
\end{align*}
\begin{align*}
        b_5&=\frac{k_1-k_2}{2d_2}\bigg\{ 2qk_1k_2(\tau_1\tau_2-1)\big[1+\omega \left((k_1-k_2)^2+(k_1\tau_1+k_2\tau_2\right)^2)\big]\\
        &-\beta_0\Big[k_1k_2(k_1-k_2)\big(\sqrt{1+\tau_1^2}-\sqrt{1+\tau_2^2}\big)-k_1k_2
        (k_1\tau_1+k_2\tau_2)\\
        &\cdot \big(\tau_2\sqrt{1+\tau_1^2}+\tau_1\sqrt{1+\tau_2^2}\big)\Big]\bigg\},
\end{align*}
\begin{equation*}
    b_6= \frac{k_1\tau_1+k_2\tau_2}{k_1-k_2}b_5,
\end{equation*}
\begin{equation*}
    d_1=4q^2 \big[1+\omega \left((k_1+k_2)^2+(k_1\tau_1-k_2\tau_2\right)^2)\big]^2-\beta_0^2\left((k_1+k_2)^2
    +(k_1\tau_1-k_2\tau_2)\right),
\end{equation*}
\begin{equation*}
    d_2=4q^2 \big[1+\omega \left((k_1-k_2)^2+(k_1\tau_1+k_2\tau_2\right)^2)\big]^2-\beta_0^2\left((k_1-k_2)^2
    +(k_1\tau_1+k_2\tau_2)\right).
\end{equation*}
Now, we substitute \eqref{u} into \eqref{e20} and  obtain
\begin{align*}
&\langle\mu U+(\beta_0+\mu)B(U,U),\xi_1\rangle\\
=&\mu\langle  X-V_1-V_2,\xi_1\rangle+(\beta_0+\mu)\langle B(X+V,X+V),\xi_1  \rangle=0,
\end{align*}
with
\begin{align*}
  B(X+V,X+V)=& B(X-V_1-V_2,X-V_1-V_2)\\
=&B(X,X)+B(V_1,V_1)+B(V_2,V_2)-2B(X,V_1)-2B(X,V_2)\\
&+2B(V_1,V_2),
\end{align*}
where   the terms of $O\left(|\mu|^2\|X\|+|\mu|\|X\|^2+\|X\|^3\right)$  can be neglected. Observe that, $2B(U_1,U_2)=(\mathbf{v}_1\cdot\mathbf{v}_2,\eta_1\mathbf{v}_2+\eta_2\mathbf{v}_1)^T $,
\begin{equation*}
\mu\langle X,\xi_1\rangle=\mu\langle A_1 \xi_1,\xi_1\rangle=A_1\mu k_1^2(1+\tau_1^2),
\end{equation*}
\begin{equation*}
 \mu\langle V_1, \xi_1\rangle=O(\mu^2 \|X\|), \quad \mu\langle V_2, \xi_1\rangle=O(\mu \|X\|^2),
\end{equation*}
\begin{equation*}
\mu\langle B(X+V,X+V),\xi_1\rangle=O(\mu \|X\|^2),
\end{equation*}
\begin{equation*}
\langle  B(X,X),\xi_1\rangle=\langle  2B(A_1\xi_1,C\zeta_0),\xi_1\rangle= C A_1 k_1^2(1+\tau_1^2),
\end{equation*}
\begin{equation*}
\langle  2B(X-V_1-V_2,V_1),\xi_1\rangle=O(\mu \|X\|^2),
\end{equation*}
\begin{equation*}
\langle  2B(V_2,V_2),\xi_1\rangle=0,\quad \langle 2B(C\zeta_0,V_2),\xi_1\rangle=\langle 2B(A_1\xi_1,V_2),\xi_1\rangle=0,
\end{equation*}
\begin{equation*}
\langle 2B(\bar{A_1}\bar{\xi_1},V_2),\xi_1\rangle=A_1|A_1|^2 \left(\frac14 k_1^4(1+\tau_1^2)^2-\beta_0a_2k_1^2(1+\tau_1^2)^\frac32+\frac12 \beta_0a_1k_1^2(1+\tau_1^2) \right),
\end{equation*}
\begin{align*}
\langle 2B(A_2\xi_2,V_2),\xi_1\rangle=&A_1|A_2|^2\Big(\frac12k_1^2k_2^2(1-\tau_1\tau_2)^2-\frac12\beta_0 b_4k_1k_2(1-\tau_1\tau_2)+\frac12\beta_0 b_5k_1\\
&\cdot k_2(\sqrt{1+\tau_1^2}-\sqrt{1+\tau_2})-\frac12\beta_0b_6 k_1k_2(\tau_1\sqrt{1+\tau_2^2}+\tau_2\sqrt{1+\tau_1^2})\Big),
\end{align*}
\begin{align*}
\langle 2B(\bar{A}_2\bar{\xi}_2,V_2),\xi_1\rangle=&A_1|A_2|^2\Big(\frac12k_1^2k_2^2(1-\tau_1\tau_2)^2+\frac12\beta_0 b_1k_1k_2(1-\tau_1\tau_2)-\frac12\beta_0 b_2k_1\\
&\cdot k_2(\sqrt{1+\tau_1^2}+\sqrt{1+\tau_2})-\frac12\beta_0b_3 k_1k_2(\tau_1\sqrt{1+\tau_2^2}-\tau_2\sqrt{1+\tau_1^2})\Big).
\end{align*}
Therefore, one has a equation in the form
\begin{equation*}
A_1P_1(|A_1|^2,|A_2|^2,C,\mu)=0
\end{equation*}
where
\begin{align*}
P_1=\mu k_1^2(1+\tau_1^2)+C \beta_0 k_1^2(1+\tau_1^2)-\beta_0l_1 |A_1|^2&-\beta_0l_2 |A_2|^2\\&+O\{(|\mu|+|C|+|A_1|^2+|A_2|^2)^2\}
\end{align*}
with
\begin{equation*}
l_1=\frac14 k_1^4(1+\tau_1^2)^2-\beta_0a_2k_1^2(1+\tau_1^2)^\frac32+\frac12 \beta_0a_1k_1^2(1+\tau_1^2),
\end{equation*}
\begin{align*}
 l_2=&k_1^2k_2^2(1-\tau_1\tau_2)^2+\frac12\beta_0(b_1- b_4)k_1k_2(1-\tau_1\tau_2)-\frac12\beta_0 b_2k_1 k_2\Big(\sqrt{1+\tau_1^2}\\
&+\sqrt{1+\tau_2}\Big)-\frac12\beta_0b_3 k_1k_2\Big(\tau_1\sqrt{1+\tau_2^2}-\tau_2\sqrt{1+\tau_1^2}\Big)+\frac12\beta_0 b_5k_1 k_2\Big(\sqrt{1+\tau_1^2}\\
&-\sqrt{1+\tau_2}\Big)-\frac12\beta_0b_6 k_1k_2\Big(\tau_1\sqrt{1+\tau_2^2}+\tau_2\sqrt{1+\tau_1^2}\Big).
\end{align*}
Similarly,  substituting \eqref{u} into \eqref{e30}, we obtain
\begin{align*}
&\langle\mu U+(\beta_0+\mu)B(U,U),\xi_2\rangle\\
=&\mu\langle  X-V_1-V_2,\xi_2\rangle+(\beta_0+\mu)\langle B(X+V,X+V),\xi_2  \rangle=0,
\end{align*}
where
\begin{equation*}
\mu\langle X,\xi_2\rangle=\mu\langle A_2 \xi_2,\xi_2\rangle=A_2\mu k_2^2(1+\tau_2^2),
\end{equation*}
\begin{equation*}
 \mu\langle V_1, \xi_2\rangle=O(\mu^2 \|X\|), \quad \mu\langle V_2, \xi_2\rangle=O(\mu \|X\|^2),
\end{equation*}
\begin{equation*}
\mu\langle B(X+V,X+V),\xi_2\rangle=O(\mu \|X\|^2),
\end{equation*}
\begin{equation*}
\langle  B(X,X),\xi_2\rangle=\langle  2B(A_2\xi_2,C\zeta_0),\xi_2\rangle= C A_2 k_2^2(1+\tau_2^2),
\end{equation*}
\begin{equation*}
\langle  2B(X-V_1-V_2,V_1),\xi_2\rangle=O(\mu \|X\|^2),
\end{equation*}
\begin{equation*}
\langle  2B(V_2,V_2),\xi_2\rangle=0,\quad \langle 2B(C\zeta_0,V_2),\xi_2\rangle=\langle 2B(A_2\xi_2,V_2),\xi_2\rangle=0,
\end{equation*}
\begin{equation*}
\langle 2B(\bar{A_2}\bar{\xi_2},V_2),\xi_2\rangle=A_2|A_2|^2 \left(\frac12 k_2^4(1+\tau_2^2)^2-\beta_0a_4k_2^2(1+\tau_2^2)^\frac32+\frac12 \beta_0a_3k_2^2(1+\tau_2^2) \right),
\end{equation*}
\begin{align*}
\langle 2B(A_1\xi_1,V_2),\xi_2\rangle=&A_2|A_1|^2\Big(\frac12k_1^2k_2^2(1-\tau_1\tau_2)^2-\frac12\beta_0 b_4k_1k_2(1-\tau_1\tau_2)+\frac12\beta_0 b_5k_1\\
&\cdot k_2(\sqrt{1+\tau_1^2}-\sqrt{1+\tau_2})-\frac12\beta_0b_6 k_1k_2(\tau_1\sqrt{1+\tau_2^2}+\tau_2\sqrt{1+\tau_1^2})\Big),
\end{align*}
\begin{align*}
\langle 2B(\bar{A}_1\bar{\xi}_1,V_2),\xi_2\rangle=&A_2|A_1|^2\Big(\frac12k_1^2k_2^2(1-\tau_1\tau_2)^2+\frac12\beta_0 b_1k_1k_2(1-\tau_1\tau_2)-\frac12\beta_0 b_2k_1\\
&\cdot k_2(\sqrt{1+\tau_1^2}+\sqrt{1+\tau_2})-\frac12\beta_0b_3 k_1k_2(\tau_1\sqrt{1+\tau_2^2}-\tau_2\sqrt{1+\tau_1^2})\Big).
\end{align*}
It follows one equation in the form
\begin{equation*}
A_2P_2(|A_1|^2,|A_2|^2,C,\mu)=0
\end{equation*}
where
\begin{align*}
P_2=\mu k_2^2(1+\tau_2^2)+ C \beta_0k_2^2(1+\tau_2^2)-\beta_0l_3 |A_2|^2&-\beta_0l_2 |A_1|^2\\&+O((|\mu|+|C|+|A_1|^2+|A_2|^2)^2)
\end{align*}
with
\begin{equation*}
l_3=\frac14 k_2^4(1+\tau_2^2)^2-\beta_0a_4k_2^2(1+\tau_2^2)^\frac32+\frac12 \beta_0a_3k_2^2(1+\tau_2^2).
\end{equation*}
Applying Cramer's rule will lead to (dropping the term of $O((|\mu|+|C|+|A_1|^2+|A_2|^2)^2)$):
\begin{align}
|A_1|^2=&\frac{(\mu+C\beta_0)\left[l_3k_1^2(1+\tau_1^2)-l_2k_2^2(1+\tau_2^2)\right]}{\beta_0(l_1l_3-l_2^2)},\label{A1}\\
|A_2|^2=&\frac{(\mu+C\beta_0)\left[l_1k_2^2(1+\tau_2^2)-l_2k_1^2(1+\tau_1^2)\right]}{\beta_0(l_1l_3-l_2^2)},\label{A2}
\end{align}
provided that the followings are satisfied
\begin{equation}\label{B}
l_1l_3-l^2_2\neq 0,
\end{equation}
\begin{equation}\label{C}
\frac{(\mu+C\beta_0)\left[l_3k_1^2(1+\tau_1^2)-l_2k_2^2(1+\tau_2^2)\right]}{\beta_0(l_1l_3-l_2^2)}\geq0,
\end{equation}
\begin{equation}\label{D}
\frac{(\mu+C\beta_0)\left[l_1k_2^2(1+\tau_2^2)-l_2k_1^2(1+\tau_1^2)\right]}{\beta_0(l_1l_3-l_2^2)}\geq 0.
\end{equation}
Therefore,
\begin{align*}
U=&C \zeta_0+A_1 \xi_1+\bar{A}_1\bar{\xi}_1+A_2\xi_2+\bar{A}_2\bar{\xi}_2-V_1(\mu,C,A_1,A_2)-V_2(\mu,C,A_1,A_2)\\
&+O((|\mu|+|C|+|A_2|^2+|A_1|^2)^2)
\end{align*}
where $C$  are respectively the averages of the elevation $\eta$. In summary, we provide the following theorem.

\begin{theorem}\label{thm1}
For  positive $\Omega$, $k_1$, $k_2$, $\tau_1$ and $\tau_2$, we consider $\beta_0>0$  such that
\begin{equation*}
   \Omega_{\beta_0}:=\{(n_1,n_2,q)\in \Z^2 \times \Z^+ ,q\left(1+\omega (p_1^2+p_2^2)\right)-\beta_0 \sqrt{p_1^2+p_2^2}=0\}
\end{equation*}
 only contains elements $(\pm 1,0,q)$ and $(0,\pm1,q)$ with  $q\in \Z$. Then, for  $\mu$, $C$ close enough to 0 and satisfying conditions (\ref{B}) (\ref{C}) and (\ref{D}) where $C$ is the average of the evolution $\eta$, there exists a family of bifurcation standing waves in the form $$U=U(\mathbf{x}+\vec{\sigma},t)\in H^{s}_{\natural,e}\times H^{s}_{\natural,o} \times H^{s}_{\natural,o},$$
 for $s\geq 2$ with $\vec{\sigma}$ being a space shift, where
\begin{equation*}
U(\mathbf{x},t)=\left(\eta^0(\mathbf{x},t),\mathbf{v}^0(\mathbf{x},t)\right),
\end{equation*}
with
\begin{align*}
\eta^0(\mathbf{x},t)=&C+2k_1\sqrt{1+\tau_1^2}|A_1|\cos{(k_1x_1+k_1\tau_1 x_2)}\cos{qt}+2k_2\sqrt{1+\tau_2^2}|A_2|\\
&\cdot \cos{(k_2x_2-k_2\tau_2 x_2)}\cos{qt}+V^{(1)}(\mu, C, A_1,A_2),
\end{align*}
\begin{align*}
\mathbf{v}^0_1(\mathbf{x},t)=&-2k_1 |A_1|\sin{(k_1x_1+k_1\tau_1 x_2)}\sin{qt}-2k_2|A_2|\sin{(k_2x_1-k_2\tau_2 x_2)}\\
&\cdot\sin{qt}+V^{(2)}(\mu, C, A_1,A_2),
\end{align*}
\begin{align*}
\mathbf{v}^0_2(\mathbf{x},t)=&-2k_1\tau_1 |A_1|\sin{(k_1x_1+k_1\tau_1 x_2)}\sin{qt}+2k_2\tau_2 |A_2|\sin{(k_2x_1-k_2\tau_2 x_2)}\\
&\cdot\sin{qt}
+V^{(3)}(\mu, C, A_1,A_2).
\end{align*}
for $|A_1|$ and $|A_2|$ are shown in (\ref{A1}) and (\ref{A2}) and
\begin{equation*}
    V(\mu, C, A_1, A_2)= (V^{(1)},V^{(2)},V^{(3)})=-V_1-V_2+O((|\mu|+|C|+|A_1|^2+|A_2|^2)^2)
\end{equation*}
with $V_1$ and $V_2$ given in (\ref{V12}).
\end{theorem}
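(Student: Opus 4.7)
The plan is to formalise the Lyapunov--Schmidt reduction constructed in the pages preceding the statement. First, I rewrite the full nonlinear system with $\beta = \beta_0 + \mu$ as the system (\ref{sys1}), and decompose any candidate solution $U\in Q_s$ as in (\ref{u0}): $U = X + V$, where $X = C\zeta_0 + A_1\xi_1 + \bar{A}_1\bar{\xi}_1 + A_2\xi_2 + \bar{A}_2\bar{\xi}_2$ belongs to $\ker \mathcal{L}_0$ (which, under the hypothesis on $\Omega_{\beta_0}$, is exactly the span identified in Theorem \ref{operator}), and $V$ lies in the complementary subspace selected by $\mathcal{T}$, i.e.\ $V$ satisfies the behaviour specified in (\ref{t2})--(\ref{t3}). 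By Theorem \ref{operator}, solving the full system is equivalent to imposing the compatibility conditions (\ref{e20})--(\ref{e30}) together with the projected equation (\ref{e1}).

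Next, I would treat (\ref{e1}) as a fixed-point problem for $V$ and invoke the implicit function theorem on the map
\[
(V, A_1, A_2, C, \mu) \longmapsto V + \mu \mathcal{T}(X+V) + (\beta_0 + \mu)\mathcal{T}B(X+V, X+V).
\]
The required ingredients are the boundedness of $\mathcal{T}$ on $Q_s$ (already established above), the Sobolev algebra property for $s \geq 2$ so that $B:Q_s\times Q_s\to Q_s$ is smooth, and the fact that at the origin the linearisation in $V$ is the identity. This delivers $V = V(A_1, A_2, C, \mu)$ with the leading-order expansion (\ref{u}), and the explicit formulas for $V_1, V_2$ recorded prior to the statement via (\ref{V12}).

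Substituting this $V$ into (\ref{e20}) and (\ref{e30}), and using bilinearity of $B$ together with the inner-product computations $\langle 2B(\bar A_1\bar\xi_1, V_2),\xi_1\rangle, \langle 2B(A_2\xi_2,V_2),\xi_1\rangle$, etc.\ already carried out, both left-hand sides factor as $A_1 P_1 = 0$ and $A_2 P_2 = 0$, with $P_1, P_2$ having the leading forms
\begin{align*}
P_1 &= \mu k_1^2(1+\tau_1^2) + C\beta_0 k_1^2(1+\tau_1^2) - \beta_0 l_1 |A_1|^2 - \beta_0 l_2 |A_2|^2 + R, \\
P_2 &= \mu k_2^2(1+\tau_2^2) + C\beta_0 k_2^2(1+\tau_2^2) - \beta_0 l_3 |A_2|^2 - \beta_0 l_2 |A_1|^2 + R,
\end{align*}
where $R = O\bigl((|\mu|+|C|+|A_1|^2+|A_2|^2)^2\bigr)$. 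Discarding the trivial branches $A_1 = 0$ or $A_2 = 0$, the bifurcation system $P_1 = P_2 = 0$ is linear in $(|A_1|^2, |A_2|^2)$ at leading order with determinant $\beta_0^2(l_1 l_3 - l_2^2)$. Under (\ref{B}) the system is invertible; Cramer's rule gives (\ref{A1})--(\ref{A2}) to leading order, and a second application of the implicit function theorem promotes these to smooth functions of $(\mu, C)$. Conditions (\ref{C}) and (\ref{D}) guarantee non-negativity of the leading expressions, hence of the full $|A_j|^2$ for $(\mu, C)$ small enough. The phases of $A_1, A_2$ remain undetermined and are absorbed into a space shift $\vec{\sigma}$ via the translation invariance of (\ref{nonl}); reassembling $U = X + V$ using (\ref{u}) then yields the stated form of $\eta^0$ and $\mathbf{v}^0$.

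The main obstacle is less conceptual than computational: careful bookkeeping of the many bilinear pairings coming from $V_2 = V_3 + V_4$ is required, both to confirm that every genuinely resonant cubic contribution has been captured in $l_1, l_2, l_3$ and to verify that the remainders really are of the claimed order $R$. Once this accounting is in place, the existence statement and the explicit asymptotics follow directly from the implicit function theorem and Cramer's rule.
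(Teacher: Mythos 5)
Your proposal follows the paper's argument essentially step for step: the same Lyapunov--Schmidt decomposition $U=X+V$ from (\ref{u0}), the implicit function theorem applied to the projected equation (\ref{e1}) using the boundedness of $\mathcal{T}$, substitution into the compatibility conditions (\ref{e20})--(\ref{e30}) to obtain $A_1P_1=0$ and $A_2P_2=0$, and Cramer's rule under (\ref{B})--(\ref{D}), with the free phases of $A_1,A_2$ absorbed into the shift $\vec{\sigma}$. This is correct and matches the paper's proof; the only work you defer --- the bookkeeping of the bilinear pairings yielding $l_1,l_2,l_3$ --- is exactly the computation the paper carries out explicitly.
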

Furthermore, we are also able to calculate higher order terms according to \eqref{u} and \eqref{V12}.
\begin{corollary}\label{e2}
With the assumption of the theorem above, the form of the free surface, even in time $t$, with $C=0$ is given by $\eta=\eta(\mathbf{x}-\vec{\sigma},t) $ with
\begin{align*}
&\eta(x_1,x_2,t)\\
=& 2k_1\sqrt{1+\tau_1^2} |A_1|
\cos{(k_1x_1+k_1\tau_1 x_2)}\cos{qt}+2k_2\sqrt{1+\tau_2^2} |A_2|\\
&\cdot\cos{(k_2x_1-k_2\tau_2x_2)}\cos{qt}+\Big[\frac12 k_1^2(1+\tau_1^2)|A_1|^2 \cos{(2k_1x_1+2k_1\tau_1x_2)}\\
&+\frac12 k_2^2(1+\tau_2^2)|A_2|^2\cos{(2k_2x_1-2k_2\tau_2x_2)}\\
&-k_1k_2(1-\tau_1\tau_2)|A_1A_2|\cos{\big((k_1+k_2)x_1+(k_1\tau_1-k_2\tau_2)x_2\big)}\\
&-k_1k_2(1-\tau_1\tau_2)|A_1A_2|\cos{\big((k_1-k_2)x_1+(k_1\tau_1+k_2\tau_2)x_2\big)}\Big]\\
&-2\beta_0a_1|A_1|^2 \cos{(2k_1x_1+2k_1\tau_1x_2)}\cos{2qt}-2\beta_0a_3|A_2|^2 \\
&\cdot\cos{(2k_2x_1-2k_2\tau_2x_2)}\cos{2qt}-2\beta_0b_1|A_1A_2| \cos \big((k_1+k_2)x_1\\
&+(k_1\tau_1-k_2\tau_2)x_2\big)\cos{2qt}-2\beta_0b_4|A_1A_2| \cos\big((k_1-k_2)x_1\\
&+(k_1\tau_1+k_2\tau_2)x_2)\big)\cos{2qt}+O(|\mu|^\frac32),
\end{align*}
for $\mu$ close to zero and  $A_1$ and $A_2$ being given in (\ref{A1})-(\ref{A2}).
\end{corollary}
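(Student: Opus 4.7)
The plan is to read off Corollary~\ref{e2} directly from the expansion
$U=X-V_1-V_2+O\bigl((|\mu|+|C|+|A_1|^2+|A_2|^2)^2\bigr)$
produced by Theorem~\ref{thm1}, taking first component and setting $C=0$. Since the amplitudes satisfy $|A_1|^2,|A_2|^2=O(|\mu|)$ by (\ref{A1})--(\ref{A2}), we have $|A_j|=O(|\mu|^{1/2})$ and the whole remainder is $O(|\mu|^{3/2})$, matching the claimed error. In particular $V_1=\mu\,\mathcal{T}X=O(|\mu|\cdot |\mu|^{1/2})=O(|\mu|^{3/2})$, so the entire $V_1$ contribution is absorbed into the error term and does not appear in the formula.

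The first step I would carry out is a phase normalization. Writing $A_j=|A_j|e^{i\phi_j}$, a space shift $\mathbf{x}\mapsto \mathbf{x}+\vec\sigma$ multiplies $e^{ik_1x_1+ik_1\tau_1 x_2}$ by $e^{ik_1\sigma_1+ik_1\tau_1\sigma_2}$ and $e^{ik_2x_1-ik_2\tau_2 x_2}$ by $e^{ik_2\sigma_1-ik_2\tau_2\sigma_2}$. The corresponding $2\times 2$ linear system for $\vec\sigma$ has determinant $-k_1k_2(\tau_1+\tau_2)\neq 0$, so we can choose $\vec\sigma$ making both $A_1,A_2$ real non-negative; this is why the final formula contains only $|A_1|,|A_2|$. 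From the explicit form of $\xi_1,\xi_2$, pairing $A_j\xi_j$ with $\bar A_j\bar\xi_j$ produces precisely the two leading cosine terms $2k_j\sqrt{1+\tau_j^2}|A_j|\cos(\dots)\cos qt$ displayed in Corollary~\ref{e2}.

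Next I would extract the first (i.e.\ $\eta$) components of $-V_3$ and $-V_4$ from the explicit expressions for $V_3$ and $V_4$ given above equation~(\ref{V12}). For $V_3$ this is straightforward because $V_3$ is purely along $\zeta_0=(1,0,0)$: with $A_j$ real, each pair $A_1^2 e^{i\theta}+\bar A_1^2 e^{-i\theta}$ collapses to $2|A_1|^2\cos\theta$, and combining with the overall prefactor $-\tfrac14$ in $V_3$ produces the $\tfrac12 k_j^2(1+\tau_j^2)|A_j|^2\cos(2\cdot)$ and $\mp k_1k_2(1-\tau_1\tau_2)|A_1A_2|\cos(\cdot)$ blocks that are stationary in time. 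For $V_4$ I use that $\delta_1,\delta_2,\delta_3,\delta_4$ have real first components $a_1\cos 2qt,a_3\cos 2qt,b_1\cos 2qt,b_4\cos 2qt$ respectively; the same conjugate-pairing trick, together with the $\beta_0$ prefactor and the sign of $-V_4$, yields the four $-2\beta_0 a_i|A_j|^2\cos(\cdot)\cos 2qt$ and $-2\beta_0 b_i|A_1A_2|\cos(\cdot)\cos 2qt$ terms.

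The main obstacle is not conceptual but combinatorial: it is the careful bookkeeping of signs, conjugate pairs, and factors of $2$ when converting the complex exponential expansion of $V_3+V_4$ into real cosines, especially for the mixed $A_1\bar A_2,\bar A_1 A_2$ and $A_1 A_2,\bar A_1\bar A_2$ products which produce the two distinct hexagonal interaction frequencies $(k_1\pm k_2,k_1\tau_1\mp k_2\tau_2)$. Once these are collected and summed with the linear contribution from $X$, and $V_1$ together with all cubic-in-amplitude and higher terms are lumped into $O(|\mu|^{3/2})$, the formula claimed in Corollary~\ref{e2} is obtained. The $\mathbf v$ components could be recovered the same way using the second and third coordinates of $\xi_j,\delta_i$, but the corollary only records the free-surface part $\eta$.
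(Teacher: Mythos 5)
Your proposal is correct and follows essentially the same route as the paper, which obtains the corollary by direct substitution of the expansion $U=X-V_1-V_2+O\bigl((|\mu|+|C|+|A_1|^2+|A_2|^2)^2\bigr)$ from \eqref{u} and \eqref{V12}, reading off the $\eta$-component of $X$, $-V_3$ and $-V_4$ with $C=0$, and absorbing $V_1=\mu\mathcal{T}X=O(|\mu|^{3/2})$ into the remainder since $|A_j|=O(|\mu|^{1/2})$ by (\ref{A1})--(\ref{A2}). Your added justification of the phase normalization via the space shift $\vec\sigma$ (with determinant $-k_1k_2(\tau_1+\tau_2)\neq 0$) makes explicit why only $|A_1|,|A_2|$ appear, and your careful conjugate-pairing bookkeeping is exactly the computation the paper leaves implicit.
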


 We can now plot the standing waves surface in the $(x_1, x_2)$ plane presented in Theorem \ref{thm1} and Corollary \ref{e2}. For fixed $(\omega,k_1,k_2,\tau_1,\tau_2)$, we choose $\beta_0$ such that $(\pm1, 0, 1)$ and $(0,\pm 1, 1)$  are the only four elements in $\Omega_{\beta_0}$ (See Lemma \ref{U}), that is,
 \[(1+\omega(k_j^2+k_j^2\tau_j^2))-\beta_0 \sqrt{k_j^2+k_j^2\tau_j^2}=0,\quad \mbox{for } j=1,2.   \]
  When $\tau_1=\tau_2$, $\Gamma$ is a symmetric diamond lattice. Figure 1 ,2, 3 and 4 show the standing waves in such symmetric lattice. When $\tau_1\neq \tau_2$, $\Gamma$ is an asymmetric hexagonal lattice. Figure  5 shows the standing waves in such lattice.

\begin{figure}[!h]
\centering
   \includegraphics[width=13
cm]{./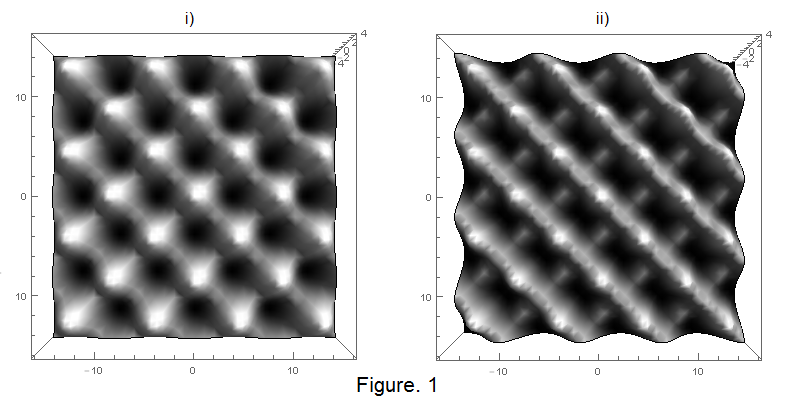}\\
\end{figure}
 For \[(\omega,k_1,k_2,\tau_1,\tau_2)=(1,\frac{1}{\sqrt{2}},\frac{1}{\sqrt{2}},1,1)\] with $(\beta_0, C, \mu)=(2,0,-0.3)$, one has $|A_1|=|A_2|=0.539972$. Here, $\tau_1=\tau_2=1$ and $k_1=k_2$ give us a square shape lattice $\Gamma$. Figure 1 shows symmetric square shape standing waves at $t=\pi/20$ in the exact same coordinates. i) is the standing waves with the leading order, ii) is the waves including the second order as in Corollary \ref{e2}.

\begin{figure}[!h]
\centering
   \includegraphics[width=13
cm]{./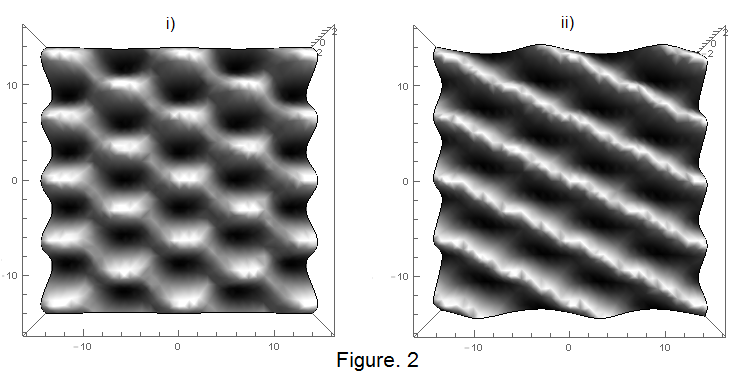}\\
\end{figure}
 For \[(\omega,k_1,k_2,\tau_1,\tau_2)=(1,\frac{1}{\sqrt{5}},\frac{1}{\sqrt{5}},2,2)\]
 with $(\beta_0, C, \mu)=(2,0,-0.02)$, one has $|A_1|=|A_2|=0.397342$. Here, $\tau_1=\tau_2=2$ and $k_1=k_2$ give us a diamond shape lattice $\Gamma$. Figure 2 shows symmetric diamond shape standing waves at $t=\pi/20$ in the exact same coordinates. i) is the standing waves with the leading order, ii) is the waves including the second order.

\begin{figure}[!h]
\centering
   \includegraphics[width=13.5
cm]{./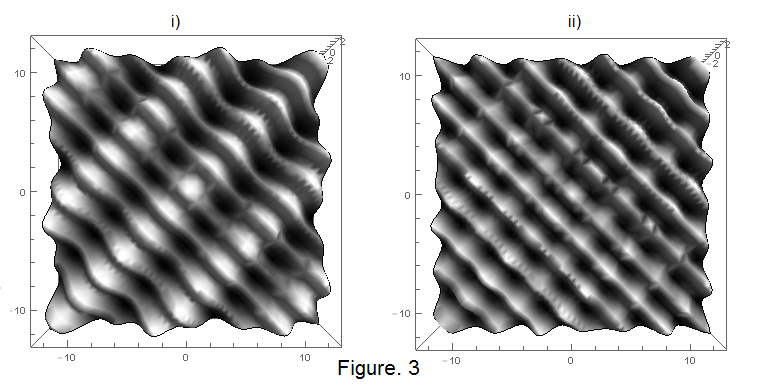}\\
\end{figure}
 For \[(\omega,k_1,k_2,\tau_1,\tau_2)=(1,\frac{\sqrt{5}+1}{2\sqrt{2}},\frac{\sqrt{5}-1}{2\sqrt{2}},1,1)\] with $(\beta_0, C, \mu)=(\sqrt{5},0,-0.2)$, one has $|A_1|=0.18089$ and $|A_2|=0.784643$. Here, $\tau_1=\tau_2=1$ and $k_1\neq k_2$ give us a rectangular shape lattice $\Gamma$. Figure 3 shows  symmetric rectangular standing waves at $t=\pi/20$ in the exact same coordinates. i) is the standing waves with the leading order, ii) is the waves including the second order.

\begin{figure}[!h]
\centering
   \includegraphics[width=13
cm]{./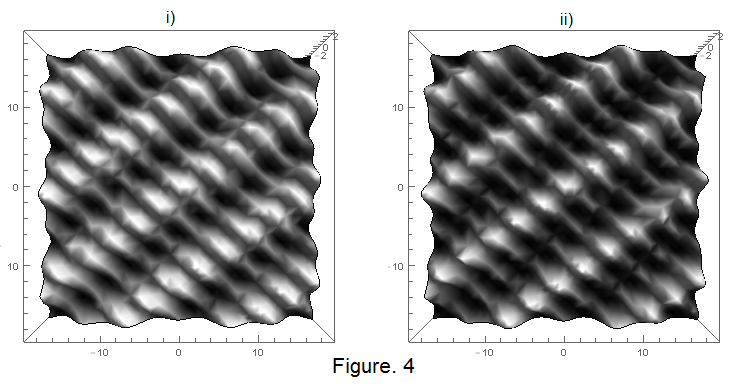}\\
\end{figure}

\begin{figure}[!h]
\centering
   \includegraphics[width=13
cm]{./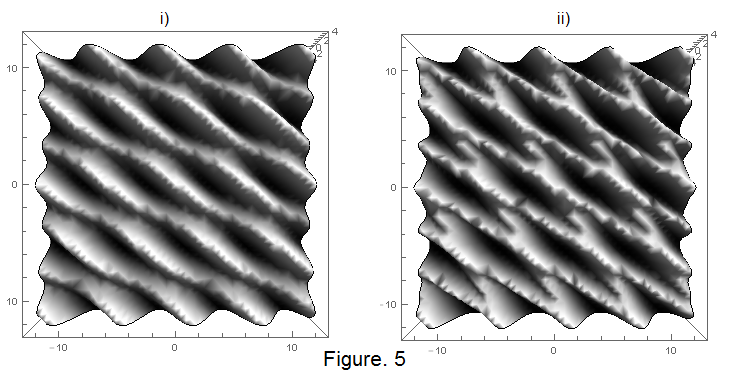}\\
\end{figure}

\ For \[(\omega,k_1,k_2,\tau_1,\tau_2)=(1,\frac{\sqrt{5}+1}{2\sqrt{3}},\frac{\sqrt{5}-1}{2\sqrt{3}},\sqrt{2},\sqrt{2})\] with $(\beta_0, C, \mu)=(\sqrt{5},0,-0.08)$, one has $|A_1|=0.186587$ and $|A_2|=0.730987$. Here, $\tau_1=\tau_2=2$ provides a symmetric lattice with $k_1\neq k_2$.  Figure 4 shows a asymmetric parallelogram standing waves at $t=\pi/20$ in the exact same coordinates. i) shows the standing waves with the leading order, ii) shows the waves including the second order.\\

 For \[(\omega,k_1,k_2,\tau_1,\tau_2)=(1,1,1,\frac{(\sqrt{5}+1)^2}{4}-1,\frac{(\sqrt{5}-1)^2}{4}-1)\] 
 with $(\beta_0, C, \mu)=(\sqrt{5},0,0.5)$, one has $|A_1|=0.47833$ and $|A_2|=0.620853$. Here,  $\tau_1\neq \tau_2$ gives an asymmetric lattice. Figure 5 shows asymmetric parallelogram standing waves at $t=\pi/20$ in the exact same coordinates. i) shows the standing waves with the leading order, ii) shows the waves including the second order.


%
%
%
%
%
%
%

\end{document}